\theoremstyle{definition}
\newtheorem{thm}{Theorem}
\newtheorem{lemma}{Lemma}
\newtheorem{prop}{Proposition}
\newtheorem{expl}{Example}
\newtheorem{remark}{Remark}
\title{An SOS counterexample to an inequality of symmetric functions}
\author[1]{Alexander Heaton}
\author[2]{Isabelle Shankar}
\affil[1]{Max Planck Institute for Mathematics in the Sciences, Leipzig}
\affil[2]{University of California, Berkeley}
\date{\today}
\begin{document}

\maketitle

\begin{abstract}
    It is known that differences of symmetric functions corresponding to various bases are nonnegative on the nonnegative orthant exactly when the partitions defining them are comparable in dominance order. The only exception is the case of homogeneous symmetric functions where it is only known that dominance of the partitions implies nonnegativity of the corresponding difference of symmetric functions. It was conjectured by Cuttler, Greene, and Skandera in 2011 that the converse also holds, as in the cases of the monomial, elementary, power-sum, and Schur bases. In this paper we provide a counterexample, showing that homogeneous symmetric functions break the pattern. We use semidefinite programming to find an explicit sums of squares decomposition of the polynomial $H_{44} - H_{521}$ as a sum of 41 squares. This rational certificate of nonnegativity disproves the conjecture, since a polynomial which is a sum of squares cannot be negative, and since the partitions 44 and 521 are incomparable in dominance order.
\end{abstract}


\maketitle

\section{Introduction}

In the article \textit{Inequalities for Symmetric Means} \cite{CGS2011}, by Cuttler, Greene, and Skandera, Muirhead-type inequalities are classified for the different common bases of symmetric functions. We briefly provide some definitions in order to state our main Theorem \ref{thm:counterexample}. First, let $m_\lambda$, $e_\lambda$, $p_\lambda$, $h_\lambda$, and $s_\lambda$ denote the monomial, elementary, power-sum, homogeneous, and Schur polynomials, respectively, associated to a partition $\lambda$. Given a symmetric polynomial $g(x)$, the \emph{term-normalized symmetric polynomial} is 
$$G(x) := \frac{g(x)}{g(\mathbf{1})}$$
where $g(\mathbf{1})$ is the symmetric polynomial evaluated on the all ones vector.
By $G_\lambda \geq G_\mu$, we mean that $G_\lambda(x_1,\ldots,x_n) \geq G_\mu(x_1,\ldots,x_n)$, on the nonnegative orthant.
That is, the inequality holds for any number of variables $n$, but only for $x_i \geq 0$, $i = 1, \ldots, n$.
We denote the term-normalized symmetric polynomials for monomial, elementary, power-sum, homogeneous, and Schur polynomials by $M_\lambda$, $E_\lambda$, $P_\lambda$, $H_\lambda$, and $S_\lambda$, respectively.  
The following theorem is a summary of known results on Muirhead-type inequalities (special cases of which go back to Maclaurin, Muirhead, Newton, and Schur), which are proven in \cite{CGS2011, Hurwitz1891, M1902, S2016}.  In particular, the arithmetic-geometric mean inequality is a special case. 
\begin{thm} \label{thm:inequalitiesCGS} Let $\lambda$ and $\mu$ be partitions such that $|\lambda| = |\mu|$.  Then
$$\begin{array}{c c l}
M_\lambda \leq M_\mu  &\iff  &\mu \succeq \lambda\\
E_\lambda \leq E_\mu  &\iff  &\lambda \succeq \mu\\
P_\lambda \leq P_\mu  &\iff  &\mu \succeq \lambda\\
S_\lambda \leq S_\mu  &\iff  &\mu \succeq \lambda
\end{array}$$
whereas $\mu \succeq \lambda$ implies that $H_\lambda \leq H_\mu$, i.e.,
$$\begin{array}{c c l}
H_\lambda \leq H_\mu  &\Longleftarrow  &\mu \succeq \lambda.\\
\end{array}$$
\end{thm}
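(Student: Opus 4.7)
The theorem has four biconditionals and one implication; my plan is to establish the forward direction (dominance implies the inequality) and the reverse direction (inequality implies dominance) separately, anchored by the Hardy--Littlewood--P\'olya characterization of majorization: $\mu \succeq \lambda$ with $|\mu|=|\lambda|$ if and only if $\lambda$ is obtained from $\mu$ by a finite sequence of T-transforms (``Robin Hood moves''), equivalently iff $\lambda = D\mu$ for some doubly stochastic matrix $D$. This reduces every forward direction to checking invariance under a single T-transform.

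For the forward direction, the case of $M_\lambda$ is the classical Muirhead inequality, following from AM-GM applied to pairs of monomials interchanged by the transform. The case of $P_\lambda$ reduces to log-convexity of $k \mapsto p_k(x)$ on $x \geq 0$, which is Cauchy--Schwarz. The case of $E_\lambda$ (with reversed orientation, since increasing a partition part while decreasing another makes products of distinct variables smaller) can be handled by differentiating the generating function $\prod_i(1 + x_i t)$ in the two parts being moved, or by Newton's identities. For $S_\lambda$ and $H_\lambda$, the cleanest path is via the Jacobi--Trudi identity $s_\lambda = \det(h_{\lambda_i - i + j})$ together with the $H$-inequality, thereby reducing $S$ to $H$; the $H$ case itself is proved by T-transform together with the positive expansion $h_\lambda = \sum_\mu K_{\mu'\lambda'} m_\mu$ (or an equivalent combinatorial identity). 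The normalizing constants $g(\mathbf{1})$ do not depend on $x$, so they commute with the inequality and only need to be tracked at the end.

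For the reverse direction, I would exhibit test points on the nonnegative orthant that distinguish incomparable partitions. A standard approach evaluates at $x = (\underbrace{1,\ldots,1}_{k},0,\ldots,0)$ and varies $k$: the quantity $G_\lambda(x)$ then depends sensitively on the partial sums $\lambda_1+\cdots+\lambda_j$, so comparing across $k$ forces the partial-sum inequalities defining $\mu \succeq \lambda$. Refinements such as $(t,1,\ldots,1)$ with $t \to \infty$ isolate $\lambda_1$ via the leading coefficient, and nested variants isolate successive partial sums.

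The main obstacle is that the choice of separating test points is basis-dependent and requires combinatorial identities specific to each of $M, E, P, S$; this technical work is carried out in \cite{CGS2011, Hurwitz1891, M1902, S2016}, and I would simply invoke those arguments rather than redo them. It is precisely this converse strategy that breaks down for $H_\lambda$, since no family of test points distinguishes $44$ from $521$ in the direction $H_{521} \leq H_{44}$, which is why only the implication is asserted for homogeneous symmetric functions and why the converse requires new ideas---namely the SOS certificate produced in the remainder of this paper.
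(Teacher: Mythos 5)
The paper does not actually prove this theorem: it is presented as a summary of known results, with the proofs delegated entirely to \cite{CGS2011, Hurwitz1891, M1902, S2016}. So the part of your proposal that simply invokes those references is exactly what the paper does, and the test-point strategy you describe for the converse directions is indeed the standard route taken there (evaluating at vectors of the form $(1,\ldots,1,0,\ldots,0)$ and $(t,1,\ldots,1)$ and extracting the partial-sum inequalities). Your sketches for the $M$, $P$, and $E$ forward directions are also essentially correct, modulo terminology: the $E$ case rests on Newton's \emph{inequalities} (log-concavity of $e_k/\binom{n}{k}$ on the nonnegative orthant), not Newton's identities.

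However, your sketch of the forward direction for $S$ and $H$ contains a genuine gap. The proposed reduction of the Schur case to the $H$ case via Jacobi--Trudi cannot work: $s_\lambda=\det\big(h_{\lambda_i-i+j}\big)$ is an alternating sum, so termwise monotonicity of normalized complete homogeneous polynomials gives no control over the determinant, and the normalization by $s_\lambda(\mathbf{1})$ does not distribute over its terms. In fact the implication $\mu\succeq\lambda \Rightarrow S_\lambda\le S_\mu$ is the deepest part of the theorem: it was conjectured in \cite{CGS2011} and proved only later, by a different argument, in \cite{S2016}, so it certainly does not follow ``cleanly'' from the $H$ statement. Likewise, for $H$ itself, the positive expansion into monomial symmetric functions (whose coefficients, incidentally, are not $K_{\mu'\lambda'}$ --- that is the $e$-to-$m$ expansion; for $h_\lambda$ they count nonnegative integer matrices with prescribed row and column sums) does not by itself yield the normalized inequality, because the weights do not match after dividing by $h_\lambda(\mathbf{1})$; the usual argument reduces, via a single Robin Hood move, to a two-part log-convexity--type inequality for the normalized $H_k$ on the nonnegative orthant, which still has to be proved. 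If you intend the forward directions to be proved rather than cited, these two steps need to be replaced; if you intend to cite, then your proposal coincides with the paper, which offers no proof beyond the references.
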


The converse for the homogeneous symmetric functions statement was conjectured in \cite{CGS2011}.
The authors also reported that for $d=|\lambda|=|\mu|=1,2,\dots,7$ their conjecture had been proven. For $d=8$ and higher, the question was unresolved. 
\begin{thm} \label{thm:counterexample}
A degree-minimal counterexample exhibiting a polynomial $H_{\mu} - H_{\lambda} \geq 0$ with $\lambda,\mu$ incomparable in dominance order is provided by $H_{44} - H_{521}$.
\end{thm}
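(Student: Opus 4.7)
The plan is to break the claim into three parts: verify that the partitions $44$ and $521$ are incomparable in dominance order, confirm that no counterexample exists at any smaller degree, and establish the nonnegativity of $H_{44} - H_{521}$ on the nonnegative orthant by producing an explicit sum-of-squares certificate.

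The combinatorial parts are immediate. Dominance requires $\sum_{i=1}^k \lambda_i \geq \sum_{i=1}^k \mu_i$ for every $k$; inspecting $(4,4)$ and $(5,2,1)$ gives $4 < 5$ but $4+4 > 5+2$, so neither dominates the other. Minimality of the degree follows from the computational verification of the conjecture for all $d \leq 7$ reported in \cite{CGS2011}, so degree $8$ is the first degree at which a counterexample can occur.

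The core of the proof is the SOS construction. I would first expand $h_{44}$ and $h_{521}$ in an explicit basis, compute the normalizing constants $h_{44}(\mathbf{1})$ and $h_{521}(\mathbf{1})$, and form $f := H_{44} - H_{521}$ as a symmetric polynomial in a fixed number $n$ of variables (taking $n$ at least the length of the longer partition is safe, and taking $n$ large enough that all degree-$8$ monomial types are represented is safer still). Next, I would set up a semidefinite program: fix a monomial vector $v$ of degree $4$ and search for a positive semidefinite matrix $Q$ satisfying the affine constraints that encode $v^\top Q v = f$. The $S_n$-symmetry of $f$ allows one to block-diagonalize $Q$ by isotypic components, substantially shrinking the SDP. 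A numerical solver returns a candidate $Q$; the delicate step is to round this to a rational matrix that \emph{exactly} satisfies the affine constraints while remaining PSD, which I would certify by producing an exact rational Cholesky-type (or $LDL^\top$) factorization exhibiting the advertised $41$ squares.

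The principal obstacle, as with any SOS certificate extracted from an SDP, is the exact rationalization: naive rounding tends to destroy either the polynomial identity or positive semidefiniteness, so one must project onto the affine constraint space and perturb carefully into the strict interior of the PSD cone before rounding. A secondary subtlety is that the inequality is asserted for arbitrarily many variables, whereas any given SOS decomposition lives in a fixed number; this can be handled either by lifting the identity into the ring of symmetric functions so that the squaring is independent of the specialization, or by a stability argument that bounds the number of variables one must check. Once the rational certificate is in hand, the conclusion is immediate: a sum of squares is nonnegative everywhere, hence nonnegative on the nonnegative orthant, and since $44$ and $521$ are incomparable, this directly contradicts the converse implication for $H_\lambda$ that the conjecture of \cite{CGS2011} would require.
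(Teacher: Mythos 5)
Your combinatorial steps (incomparability of $(4,4)$ and $(5,2,1)$, minimality via the verified cases $d\le 7$) match the paper, but the core SOS construction has two genuine gaps. First, you certify the wrong polynomial. You propose a Gram identity $v^\top Q v = f$ with $v$ the degree-$4$ monomials, i.e.\ an SOS decomposition of $f = H_{44}-H_{521}$ itself. But $f$ only needs to be (and is only claimed to be) nonnegative on the nonnegative orthant; an SOS decomposition of $f$ would certify \emph{global} nonnegativity, a strictly stronger statement that nothing in the problem guarantees, so your SDP may simply be infeasible. The paper's essential first move, which you omit, is the substitution $x_i\mapsto x_i^2$ (Lemma \ref{lem:nonnegativeoctant}): one certifies instead that the degree-$16$ form $(H_{44}-H_{521})(x_1^2,x_2^2,x_3^2)$ is a sum of squares, using the $45$ monomials of degree $8$ in three variables. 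This is also the only way the advertised $41$ squares can arise -- they come from a rank-$41$ factorization of a $45\times 45$ Gram matrix -- whereas your degree-$4$ vector in three variables has only $15$ entries, so your framework cannot even produce the certificate being claimed.

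Second, your rationalization plan (``perturb carefully into the strict interior of the PSD cone before rounding'') is structurally unavailable here. Both $H_{44}$ and $H_{521}$ are normalized to equal $1$ at the all-ones vector, so the difference vanishes at $\mathbf{1}$; hence the target polynomial has nontrivial real zeros, and for any feasible Gram matrix $Q$ the corresponding monomial-evaluation vectors lie in the kernel of $Q$. The feasible set is therefore contained in the boundary of the PSD cone and has no positive definite point to perturb toward. The paper's proof turns this obstruction into the key tool: it imposes the kernel vectors coming from real zeros (Lemma \ref{lem:zeros}) together with the $S_3$-invariance constraints (Theorem \ref{thm:symmetry}, following Gatermann--Parrilo) as additional linear conditions, which restricts the SDP to the correct face so that continued-fraction rounding of the numerical solution remains PSD and reproduces the polynomial exactly; the authors explicitly note that without both ingredients the rounding failed. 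Your symmetry reduction is in the right spirit, but the interior-perturbation step must be replaced by this face/kernel analysis. (A minor further point: the paper fixes $n=3$ and certifies nonnegativity on $\mathbb{R}^3_{\ge 0}$; your proposed ``lifting into the ring of symmetric functions'' to handle all $n$ at once is not an argument and is not how the result is stated or proved.)
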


We certify the nonnegativity of this polynomial on $\mathbb{R}^3_{\geq 0}$ by writing a related polynomial explicitly as a sum of $41$ squares with rational coefficients. Explicitly, the polynomial we exhibit as a sum of squares of polynomials is  $\big(H_{44} - H_{521}\big) (x_1^2,x_2^2,x_3^2) =$
\begin{multline*}
    \frac{1}{9450} \Big( 17 \, x_{1}^{16} + 9 \, x_{1}^{14} x_{2}^{2} +  \, x_{1}^{12} x_{2}^{4} + 18 \, x_{1}^{10} x_{2}^{6} + 60 \, x_{1}^{8} x_{2}^{8} + 18 \, x_{1}^{6} x_{2}^{10}
    +  \, x_{1}^{4} x_{2}^{12} + 9 \, x_{1}^{2} x_{2}^{14}\\
    + 17 \, x_{2}^{16} + 9 \, x_{1}^{14} x_{3}^{2} - 32 \, x_{1}^{12} x_{2}^{2} x_{3}^{2} - 6 \, x_{1}^{10} x_{2}^{4} x_{3}^{2} + 11 \, x_{1}^{8} x_{2}^{6} x_{3}^{2} 
    + 11 \, x_{1}^{6} x_{2}^{8} x_{3}^{2} \\
    - 48 \, x_{1}^{4} x_{2}^{10} x_{3}^{2} - 32 \, x_{1}^{2} x_{2}^{12} x_{3}^{2} + 9 \, x_{2}^{14} x_{3}^{2} +  \, x_{1}^{12} x_{3}^{4}  - 48 \, x_{1}^{10} x_{2}^{2} x_{3}^{4}
    - 22 \, x_{1}^{8} x_{2}^{4} x_{3}^{4} \\
    - 5 \, x_{1}^{6} x_{2}^{6} x_{3}^{4} - 22 \, x_{1}^{4} x_{2}^{8} x_{3}^{4} - 48 \, x_{1}^{2} x_{2}^{10} x_{3}^{4} +  \, x_{2}^{12} x_{3}^{4}  + 18 \, x_{1}^{10} x_{3}^{6} 
    + 11 \, x_{1}^{8} x_{2}^{2} x_{3}^{6} \\
    - 5 \, x_{1}^{6} x_{2}^{4} x_{3}^{6} - 5 \, x_{1}^{4} x_{2}^{6} x_{3}^{6} + 11 \, x_{1}^{2} x_{2}^{8} x_{3}^{6} + 18 \, x_{2}^{10} x_{3}^{6}  + 60 \, x_{1}^{8} x_{3}^{8} 
    + 11 \, x_{1}^{6} x_{2}^{2} x_{3}^{8} \\
    - 22 \, x_{1}^{4} x_{2}^{4} x_{3}^{8} + 11 \, x_{1}^{2} x_{2}^{6} x_{3}^{8} + 60 \, x_{2}^{8} x_{3}^{8} + 18 \, x_{1}^{6} x_{3}^{10}   - 48 \, x_{1}^{4} x_{2}^{2} x_{3}^{10} - 48 \, x_{1}^{2} x_{2}^{4} x_{3}^{10} \\
    + 18 \, x_{2}^{6} x_{3}^{10} +  \, x_{1}^{4} x_{3}^{12} 
    - 32 \, x_{1}^{2} x_{2}^{2} x_{3}^{12} +  \, x_{2}^{4} x_{3}^{12} + 9 \, x_{1}^{2} x_{3}^{14} + 9 \, x_{2}^{2} x_{3}^{14} + 17 \, x_{3}^{16} \Big).
\end{multline*}

\begin{remark}
Of course, there are other ways to show that a polynomial is nonnegative. However, Theorem \ref{thm:counterexample} states something stronger. Not only is it nonnegative, but also a sum of squares. An ongoing research topic belonging to the general context of Hilbert's 17th Problem is to understand the difference between sums of squares and nonnegativity, see \cite{B2006, B2013, BIJ2015, BR2012, CLR1987, L2017} to name only a few. As an interesting example, the degrees of irreducible components of the boundary of the SOS cone are Gromov-Witten numbers (see \cite{BHORS2012, MP2013}).
\end{remark}

\begin{remark}
We refer the reader to \cite[Ch. 7]{S1999} for details on symmetric functions, but we give some brief comments on the homogeneous symmetric functions, since they are the main focus in this article. Let $h_d$ be the sum of all monomials of degree $d$, with $h_0 = 1$. Then define $h_\lambda = \prod h_{\lambda_i}$ where $\lambda = (\lambda_1,\lambda_2,\dots)$ is a sequence of nonnegative integers, eventually zero, with $\lambda_i \geq \lambda_{i+1}$. The homogeneous and elementary symmetric functions $h_\lambda$ and $e_\lambda$ are in many ways dual. By the fundamental theorem of symmetric functions, the $e_d$ are algebraically independent and generate the algebra of symmetric functions $\Lambda$. Therefore, an algebra endomorphism $\omega:\Lambda \to \Lambda$ is uniquely defined by specifying the images $\omega(e_d)$. Defining $\omega(e_d) = h_d$ gives an involution $\omega^2 = \text{id}$ of $\Lambda$ sending $\omega(e_\lambda) = h_\lambda$ and $\omega(h_\lambda) = e_\lambda$. While the transition matrices between the bases $(m_\lambda)$ and $(e_\lambda)$ are matrices with entries in $\{0,1\}$, those for $(m_\lambda)$ and $(h_\lambda)$ are matrices with entries in $\mathbb{N}$. This also reflects the combinatorial interpretations related to placing balls in boxes without or with repetition. See \cite[Sections 7.5, 7.6]{S1999} for more details.
\end{remark}

We leave the proof of Theorem \ref{thm:counterexample} to the end of the paper in Section \ref{sec:proof}.  Instead we begin in Section \ref{sec:methods} by describing our approach of finding a counterexample and extracting an exact rational SOS certificate.  In Section \ref{sec:posets}, we provide many additional counterexamples that have been certified via numerical means.  While our proof of Theorem \ref{thm:counterexample} is provided by an explicit list of polynomials which square and sum to $\left(H_{44} - H_{521}\right)(x_1^2,x_2^2,x_3^2)$, in fact we found many numerical counterexamples in degrees 8, 9, and 10. We expect that most of these numerical counterexamples, could, with some effort, be converted into provably-correct sums of squares using exact rational arithmetic, as we did for $H_{44} - H_{521}$. Section $\ref{sec:posets}$ displays a poset showing how the dominance partial order on partitions would need to be modified in order to correctly reflect nonnegativity, at least as suggested by our numerical counterexamples. It would be very interesting to see if some modification of dominance order could achieve the correct nonnegativity relationships.

\section{Methods} \label{sec:methods}

In this section we outline the steps taken to find the counterexample given in Theorem \ref{thm:counterexample}.
In order to find such a counterexample, we must search over pairs of partitions $(\mu,\lambda)$ that are incomparable in dominance order and such that $(H_\lambda - H_\mu) \geq 0$, specifically searching over partitions of 8 and polynomials in $\mathbb{R}[x_1,x_2,x_3]$.
To this end, we first recognize that we can certify nonnegativity on the nonnegative orthant by replacing the variables with squares.  That is, to certify that $(H_\lambda - H_\mu)(x_1,x_2,x_3) \geq 0$ for all $x\in \mathbb{R}^3_{\geq 0}$, we instead search for polynomials $(H_\lambda - H_\mu)(x_1^2,x_2^2,x_3^2)$ which are SOS.  See Lemma \ref{lem:nonnegativeoctant} in Section \ref{sec:proof} for a proof.

Now we may utilize the well-known machinery of sums of squares, the study of which has a long history. See, for example, \cite{BPT2013} for more on the theory.  
We will specifically use Proposition \ref{lem:ldl} below, a well-known and extremely important fact in sums of squares.
Let $\mathcal{S}_+^N$ denote the cone of $N\times N$ symmetric positive semidefinite matrices in the space of $N\times N$ symmetric matrices $\mathcal{S}^N$.

\begin{prop} \label{lem:ldl}
Let $h$ be a homogeneous polynomial of degree $2d$ in $n$ variables, $h \in \mathbb{R}[x_1,
\dots,x_n]_{2d}$. Let $m$ be a vector containing all $N = \binom{n+d-1}{d}$ monomials of degree $d$.  Then $h$ is a sum of squares exactly when there exists some $A \in \mathcal{S}_+^N$ such that
$$ h(x_1,\dots,x_n) = m^T A m.$$
\end{prop}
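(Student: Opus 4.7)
The plan is to prove both directions of the equivalence by relating SOS decompositions to Gram matrices via the monomial basis.

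First I would address the forward direction. Suppose $h = \sum_{i=1}^r p_i^2$ for some polynomials $p_i \in \mathbb{R}[x_1,\dots,x_n]$. The first step is to argue that we can take each $p_i$ to be homogeneous of degree exactly $d$. Writing $p_i = \sum_k p_i^{(k)}$ as a sum of its homogeneous components of degree $k$, we have $h = \sum_i \sum_{j,k} p_i^{(j)} p_i^{(k)}$. Collecting by total degree and using that $h$ is homogeneous of degree $2d$, the only surviving contribution is $\sum_i (p_i^{(d)})^2$ (the "diagonal" in $j=k=d$), while all other degree parts must cancel — in fact a cleaner form of this argument is to note that $\sum_i (p_i^{(d)})^2 \leq \sum_i p_i^2 = h$ and that both sides have the same degree-$2d$ component. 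Once each $p_i$ is homogeneous of degree $d$, it can be written as $p_i = c_i^T m$ for a unique coefficient vector $c_i \in \mathbb{R}^N$, since the degree-$d$ monomials are an $\mathbb{R}$-basis for the space $\mathbb{R}[x_1,\dots,x_n]_d$. Then
\[
h = \sum_{i=1}^r (c_i^T m)^2 = m^T \Bigl( \sum_{i=1}^r c_i c_i^T \Bigr) m,
\]
and $A := \sum_i c_i c_i^T$ is a sum of rank-one PSD matrices, hence $A \in \mathcal{S}_+^N$.

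Next I would handle the converse. Given $A \in \mathcal{S}_+^N$ with $h = m^T A m$, apply the spectral theorem to write $A = \sum_{i=1}^N \lambda_i v_i v_i^T$ with $\lambda_i \geq 0$ and $v_i \in \mathbb{R}^N$. Setting $c_i := \sqrt{\lambda_i}\, v_i$ yields $A = \sum_i c_i c_i^T$, so
\[
h = m^T A m = \sum_{i=1}^N (c_i^T m)^2,
\]
exhibiting $h$ as a sum of at most $N$ squares. (Equivalently one may invoke a Cholesky or $LDL^T$ factorization, which is the form alluded to by the lemma name.)

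The count $N = \binom{n+d-1}{d}$ is simply the dimension of $\mathbb{R}[x_1,\dots,x_n]_d$, i.e., the number of degree-$d$ monomials in $n$ variables, which I would remark on in passing. The main — and essentially the only — subtlety is the homogeneity reduction in the forward direction: one must be careful that an SOS representation with inhomogeneous $p_i$'s can be replaced by one with homogeneous $p_i$'s of degree exactly $d$. Everything else is linear algebra: expressing polynomials in the monomial basis and decomposing a PSD matrix as a sum of rank-one PSD matrices.
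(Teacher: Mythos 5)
The paper itself gives no proof of this proposition; it is quoted as a well-known fact with a pointer to the literature (\cite{BPT2013}), so the comparison is against the standard argument, which is exactly the route you take: Gram matrices via the monomial basis in one direction, and a spectral (or $LDL^T$/Cholesky) factorization $A=\sum_i c_ic_i^T$ in the other. Your converse direction and the final bookkeeping are fine. The problem is in the homogeneity reduction, which you yourself flag as the only subtle step: as written, its justification does not hold up. Collecting $\sum_i\sum_{j,k}p_i^{(j)}p_i^{(k)}$ by total degree does not leave only the diagonal term $j=k=d$; the cross terms $p_i^{(j)}p_i^{(k)}$ with $j+k=2d$, $j\neq k$, also sit in degree $2d$, so "the only surviving contribution is $\sum_i (p_i^{(d)})^2$" is not a consequence of degree bookkeeping alone. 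The proposed "cleaner form" is worse: the inequality $\sum_i (p_i^{(d)})^2 \leq \sum_i p_i^2$ is simply false as a pointwise statement (already $p=1+x_1$ at $x_1=-1$ gives $1\not\leq 0$), and in the only situation where you would invoke it (all $p_i$ forced to be homogeneous of degree $d$) it holds trivially because the conclusion you are trying to prove already holds, so using it is circular.

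The standard repair is short. Let $D$ be the maximal degree occurring among the $p_i$. The degree-$2D$ homogeneous component of $\sum_i p_i^2$ is exactly $\sum_i \bigl(p_i^{(D)}\bigr)^2$, since no cross term can reach degree $2D$. If $D>d$, this component must vanish because $h$ is homogeneous of degree $2d$; but a sum of squares of real polynomials is identically zero only if every summand is zero (evaluate at real points), contradicting the maximality of $D$. Hence $\deg p_i\leq d$ for all $i$, and now the degree-$2d$ component of $\sum_i p_i^2$ is precisely $\sum_i \bigl(p_i^{(d)}\bigr)^2$, which therefore equals $h$; so each $p_i$ may be replaced by $p_i^{(d)}$. (A symmetric argument on the lowest-order components even shows each nonzero $p_i$ was already homogeneous of degree $d$.) With this patch, the rest of your proof — writing $p_i^{(d)}=c_i^Tm$, setting $A=\sum_i c_ic_i^T\in\mathcal{S}_+^N$, and conversely decomposing a PSD $A$ into rank-one terms — is correct and is the same argument the literature cited by the paper has in mind.
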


Proposition \ref{lem:ldl} tells us that writing a polynomial as a sum of squares is equivalent to solving a semidefinite program (SDP). That is, we must find a positive semidefinite matrix $A$ that also satisfies the linear constraints defined by equating the coefficients of $h(x_1,\dots,x_n)$ and $m^T A m$. Unfortunately, SDP solvers return numerical solutions, i.e. a matrix with floating point entries. In particular, the matrix will (almost) never exactly reproduce the desired polynomial, a problem when searching for an exact counterexample.  We illustrate this with the following running example.

\begin{expl}
An attempt to reproduce the polynomial $(H_{21} - H_{111})(x_1^2,x_2^2,x_3^2)$, whose nonnegativity  follows from Theorem \ref{thm:inequalitiesCGS}, produced the following polynomial with floating point coefficients
\begin{multline*}
\frac{1}{54} \, x_{1}^{6} + \frac{1}{54} \, x_{2}^{6} + \left(7.888609052210118 \times 10^{-31}\right) \, x_{1}^{3} x_{2}^{2} x_{3} + \left(7.888609052210118 \times 10^{-31}\right) \, x_{1}^{2} x_{2}^{3} x_{3}\\
+ \left(3.944304526105059 \times 10^{-31}\right) \, x_{1}^{3} x_{2} x_{3}^{2} - 0.05555555555555555 \, x_{1}^{2} x_{2}^{2} x_{3}^{2} \\
+ \left(3.944304526105059 \times 10^{-31}\right) \, x_{1} x_{2}^{3} x_{3}^{2} + \left(3.944304526105059 \times 10^{-31}\right) \, x_{1}^{2} x_{2} x_{3}^{3} \\
+ \left(3.944304526105059 \times 10^{-31}\right) \, x_{1} x_{2}^{2} x_{3}^{3} + \frac{1}{54} \, x_{3}^{6}
\end{multline*}
even though the desired polynomial is
$$ \frac{1}{54} \, x_{1}^{6} + \frac{1}{54} \, x_{2}^{6} - \frac{1}{18} \, x_{1}^{2} x_{2}^{2} x_{3}^{2} + \frac{1}{54} \, x_{3}^{6}. $$
In fact, Hurwitz proved nonnegativity of this polynomial via sums of squares \cite{Hurwitz1891}. 
\end{expl}

Therefore, in order to find an \textit{exact} sum of squares certificate of nonnegativity, we must make adjustments. To satisfy Proposition \ref{lem:ldl} we must replace the entries of the matrix itself, while staying in the PSD cone, and continuing to satisfy the requirements of $m^T A m = h$ exactly. One approach to this problem is to use continued fractions to find the best rational approximation (with some user-specified bound $B$ on the size of the denominator) to the entries of the matrix. Geometrically, the SDP may return a matrix on or near the boundary of the PSD cone. By rounding the floating point entries to rational numbers, we risk moving outside the cone, resulting in a matrix which is not positive semidefinite. Therefore, many times this rational rounding procedure will fail.

\begin{remark}
In general, rational certificates for polynomials with rational coefficients do not always exist. This was shown by Scheiderer in \cite{S2012} where he provided explicit minimal examples of degree 4 polynomials in 3 variables. Since the polynomial in Theorem \ref{thm:counterexample} is of degree 16, there is no a priori reason to believe it must have a rational sum of squares representation. 
\end{remark}

Several approaches to this rational rounding problem have been developed. The package \texttt{SOS} has a rational rounding procedure built-in, but for our polynomial their package returned an error stating that the rational rounding had failed. The software \texttt{RealCertify} \cite{MS2018-software}, based on \cite{MS2018}, uses a hybrid numeric-symbolic algorithm for finding rational approximations for polynomials lying in the interior of the SOS cone. In correspondence during the writing of this paper, Mohab Safey El Din reported that \texttt{RealCertify} failed to terminate for our problem. However, in \cite{MS2018} they also describe and compare complexity of several different algorithms, including geometric critical point methods. Safey El Din reported that the geometric critical point methods were successful on our problem, providing a second confirmation of the nonnegativity of our polynomial, although not of its SOS-ness.

\begin{remark}
Another approach would be to search directly for the matrix using exact arithmetic as in \cite{HNS2016} with the package \texttt{SPECTRA} for \texttt{Maple}. However, for a problem of our size, this approach is not promising. Indeed we let \texttt{SPECTRA} run for several days, and it did not terminate. Ours is a feasibility problem, but when optimizing a linear function for a rational SDP, the entries of the optimal solution matrix will be algebraic numbers. In \cite{NRS2006} the algebraic degree of an SDP is introduced. For generic inputs, this degree depends only on the rank $r$ of the solution matrix, the size $n$ of the symmetric matrices, and the dimension $m$ of the affine subspace. In \cite{BR2009} the authors give an exact formula for the algebraic degree. If $n = 45, m = 129, r = 41$ (which corresponds to our problem), their formula yields the following 74 digit number:
$$27986928303724394857777762195272647267703276932951767224059513477726952420.$$
\end{remark}

\addtocounter{expl}{-1}
\begin{expl} \label{expl:matrices} \textbf{(continued)}
Returning to the example above, the output of the SDP for $(H_{21} - H_{111})(x_1^2,x_2^2,x_3^2)$ was the following matrix, for which we print only the first four of ten columns:
\begin{tiny}
$$ \left(\begin{array}{rrrr}
\frac{1}{54} & 0 & 0 & -0.009259228275647818 \\
0 & 0.018518456551295637 & 1.4472934340259067 \times 10^{-17} & -3.1517672055168234 \times 10^{-14} \\
0 & 1.4472934340259067 \times 10^{-17} & 0.018518456551295637 & 3.1499353284307314 \times 10^{-14} \\
-0.009259228275647818 & -3.1517672055168234 \times 10^{-14} & 3.1499353284307314 \times 10^{-14} & 0.018518456551295637 \\
-1.4472934340259067 \times 10^{-17} & 7.150822547960123 \times 10^{-18} & 7.150822547960123 \times 10^{-18} & 7.150822547960123 \times 10^{-18} \\
-0.009259228275647818 & 3.1499353284307314 \times 10^{-14} & -3.1517672055168234 \times 10^{-14} & -0.009259259448737624 \\
3.1517672055168234 \times 10^{-14} & -0.009259228275647818 & -3.1506504106855275 \times 10^{-14} & 0 \\
-3.1506504106855275 \times 10^{-14} & 3.1499353284307314 \times 10^{-14} & -0.009259259448737624 & 1.4472934340259067 \times 10^{-17} \\
-3.1506504106855275 \times 10^{-14} & -0.009259259448737624 & 3.1499353284307314 \times 10^{-14} & 3.1499353284307314 \times 10^{-14} \\
3.1517672055168234 \times 10^{-14} & -3.1506504106855275 \times 10^{-14} & -0.009259228275647818 & -3.1506504106855275 \times 10^{-14}
\end{array} \cdots \right)$$
\end{tiny}
Using continued fractions with denominator bound $B=150$ we obtain the following (preferable) matrix:
$$ A = \frac{1}{108}\left(\begin{array}{rrrrrrrrrr}
2 & 0 & 0 & -1 & 0 & -1 & 0 & 0 & 0 & 0 \\
0 & 2 & 0 & 0 & 0 & 0 & -1 & 0 & -1 & 0 \\
0 & 0 & 2 & 0 & 0 & 0 & 0 & -1 & 0 & -1 \\
-1 & 0 & 0 & 2 & 0 & -1 & 0 & 0 & 0 & 0 \\
0 & 0 & 0 & 0 & 0 & 0 & 0 & 0 & 0 & 0 \\
-1 & 0 & 0 & -1 & 0 & 2 & 0 & 0 & 0 & 0 \\
0 & -1 & 0 & 0 & 0 & 0 & 2 & 0 & -1 & 0 \\
0 & 0 & -1 & 0 & 0 & 0 & 0 & 2 & 0 & -1 \\
0 & -1 & 0 & 0 & 0 & 0 & -1 & 0 & 2 & 0 \\
0 & 0 & -1 & 0 & 0 & 0 & 0 & -1 & 0 & 2
\end{array}\right) $$ 
With $m^T = \left(x_{1}^{3},\,x_{1}^{2} x_{2},\,x_{1}^{2} x_{3},\,x_{1} x_{2}^{2},\,x_{1} x_{2} x_{3},\,x_{1} x_{3}^{2},\,x_{2}^{3},\,x_{2}^{2} x_{3},\,x_{2} x_{3}^{2},\,x_{3}^{3}\right)$, we can calculate $ m^T A m $, obtaining
$$ \frac{1}{54} \Big( x_{1}^{6} +  \, x_{2}^{6} - 3 \, x_{1}^{2} x_{2}^{2} x_{3}^{2} +  \, x_{3}^{6} \Big) $$
which is exactly $(H_{21} - H_{111})(x_1^2,x_2^2,x_3^2)$, as desired. Since $A$ is positive semidefinite, by carrying out $LDL^T$ factorization we obtain the following sum of squares representation of $(H_{21} - H_{111})(x_1^2,x_2^2,x_3^2)$:
\begin{multline*}
\frac{1}{216} \, {\left(2 \, x_{1}^{3} - x_{1} x_{2}^{2} - x_{1} x_{3}^{2}\right)}^{2} + \frac{1}{216} \, {\left(2 \, x_{1}^{2} x_{2} - x_{2}^{3} - x_{2} x_{3}^{2}\right)}^{2} + \frac{1}{72} \, {\left(x_{1} x_{2}^{2} - x_{1} x_{3}^{2}\right)}^{2}\\
+ \frac{1}{72} \, {\left(x_{2}^{3} - x_{2} x_{3}^{2}\right)}^{2} + \frac{1}{216} \, {\left(2 \, x_{1}^{2} x_{3} - x_{2}^{2} x_{3} - x_{3}^{3}\right)}^{2} + \frac{1}{72} \, {\left(x_{2}^{2} x_{3} - x_{3}^{3}\right)}^{2}.
\end{multline*}

Of course, the expression of a polynomial as a sum of squares is not unique.  
For example, this same polynomial appears as a sum of squares of binomials in \cite{Hurwitz1891} and also as a sum of squares of binomials and one trinomial in \cite{ReznickQuantitative1987Hurwitz}.
\begin{align*}
    x_{1}^{6} +  \, x_{2}^{6} +  \, x_{3}^{6} - 3 \, x_{1}^{2} x_{2}^{2} x_{3}^{2}    &= \,(x_1^3 - x_1x_2^2)^2   + (x_3^3 - x_2^2x_3)^2 \\ 
    &\qquad + \frac{1}{2}(x_1^2x_2-x_2^3)^2  + \frac{1}{2}(x_2x_3^2 - x_2^3)^2 + \frac{3}{2}(x_1^2x_2 - x_2x_3^2)^2 \\
    & = (x_1^2x_2 - x_2^3)^2 + (x_1^2x_3 - x_3^3)^2 + \frac{7}{4}(x_1x_2^2-x_1x_3^2)^2  + \frac{1}{4}(x_1x_2^2 + x_1x_3^2 - 2x_1^3)^2
\end{align*} 
\end{expl}

As noted in the discussion above, for $H_{44} - H_{521}$, existing tools do not return a numerical matrix which can be successfully rounded.  Our solution to this problem depended crucially on two things, using the real zeros of the polynomial, and using symmetry. In particular, we used the symmetry reduction techniques developed by Gatermann and Parrilo in \cite{GP2004} which we briefly describe now.

To start, we wish to take advantage of the fact that $H_{44} - H_{521}$ is a symmetric polynomial. That is to say, it is invariant under the action of the symmetric group.  There is a great deal of literature on the subject of symmetric polynomials and sums of squares, including \cite{BR2012, CL1976,L2017, GP2004, GKR2017}, to name only a few.
In particular, we specialize Theorem 3.3 of \cite{GP2004} to our setting as follows:
 
\begin{thm} \label{thm:symmetry}
Given an orthogonal linear representation of the symmetric group $S_n$, $\sigma: S_n \to Aut(\mathcal{S}^N)$, consider a semidefinite program whose objective and feasible matrices are invariant under the group action. Then the optimal value of the SDP is equal to the optimal value of the same SDP restricted to its fixed point subspace, $\{X \in \mathcal{S}^N: X = \sigma(g)X, \,\, \forall g \in S_n\}$.
\end{thm}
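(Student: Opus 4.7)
The plan is to use the Reynolds operator (group averaging) to project any feasible point of the SDP into the fixed point subspace while preserving both feasibility and objective value. Concretely, I would introduce
$$R\colon \mathcal{S}^N \to \mathcal{S}^N, \qquad R(X) = \frac{1}{n!}\sum_{g \in S_n} \sigma(g)X,$$
and observe by the standard argument (applying $\sigma(h)$ on the left merely permutes the sum) that $R$ is a projection onto the fixed point subspace $\mathcal{F} = \{X \in \mathcal{S}^N : \sigma(g)X = X \text{ for all } g \in S_n\}$. Thus $R(X) \in \mathcal{F}$ for every $X$, and $R$ restricts to the identity on $\mathcal{F}$, so $\mathcal{F}$ is exactly the image of $R$.

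The next step is to check that $R$ preserves both feasibility and objective value. The feasible region is the intersection of an affine subspace with the PSD cone $\mathcal{S}^N_+$, so it is convex, and by hypothesis it is $\sigma$-invariant. Hence each term $\sigma(g)X$ in the average is feasible whenever $X$ is, and their convex combination $R(X)$ is also feasible. Similarly, invariance of the linear objective functional $\langle C, \cdot \rangle$ under $\sigma$ gives $\langle C, \sigma(g)X\rangle = \langle C, X\rangle$ for every $g \in S_n$, and averaging preserves this equality by linearity. So for every feasible $X$ with value $v$ there is a point $R(X) \in \mathcal{F}$ that is still feasible for the original problem and still has objective value $v$.

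Equality of optima then follows by a short two-direction argument: the restricted SDP has feasible region contained in that of the original, so (for a minimization problem) its optimum is at least the original optimum; conversely, applying $R$ term-by-term to any sequence of feasible points whose objective values approach the original optimum produces feasible points in $\mathcal{F}$ with the same objective values, forcing the restricted optimum to be at most the original. Together these give the claimed equality. The main subtlety, and the only place the orthogonality hypothesis is really used, is the claim that $R(X)$ is still PSD: this requires each $\sigma(g)$ to preserve the cone $\mathcal{S}^N_+$, which for the representations in play comes from realizing $\sigma(g)X = P_g X P_g^T$ for orthogonal matrices $P_g$, so that cone membership is preserved term by term. Without this cone-preservation property packaged into the word \emph{orthogonal}, the averaging could exit $\mathcal{S}^N_+$ and the argument would collapse, so the technical heart of the proof is really the interplay between convexity of $\mathcal{S}^N_+$, invariance of the problem data, and this cone-preserving property of $\sigma$.
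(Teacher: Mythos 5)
Your averaging argument is correct; note that the paper does not prove this statement itself but cites it as a specialization of Theorem 3.3 of Gatermann and Parrilo \cite{GP2004}, whose proof is exactly this Reynolds-operator/convexity argument. Your handling of the unattained-optimum case via sequences and your remark that orthogonality of the action is what keeps the average inside $\mathcal{S}^N_+$ are both sound, so the proposal matches the standard (cited) proof.
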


In our case, the $S_3$ action on the space of polynomials in three variables induces an action on the symmetric matrix of our SDP, where group elements act on the symmetric matrix by conjugation.  
More specifically, for each $g \in S_3$, let $\rho(g)$ be the associated matrix that permutes the monomials of degree 8 in 3 variables. 
Then the induced action sends a symmetric $45\times 45$ matrix $X \to \rho(g)^TX\rho(g)$.  Note that $\rho(g)$ is an orthogonal matrix.  Then the fixed-point subspace for our particular SDP is
$$\mathcal{F} = \{X : X\rho(g) = \rho(g)X, \,\, \forall g \in S_3\}.$$
Theorem \ref{thm:symmetry} guarantees that if a solution exists to our SDP, a solution also exists if we restrict to this fixed-point subspace.  Thus we force our matrix $A$ to commute with the elements of our group, obtaining better constraints on our semidefinite program.  
Indeed, we further simplify our SDP with additional linear constrains via Lemma \ref{lem:zeros}, but leave the details in Section \ref{sec:proof}.
The resulting simplified SDP returns a positive semidefinite matrix to which we can successfully apply the rational rounding described above.  Upon factoring the matrix using exact, rational arithmetic we obtain an explicit sum of squares representation for $H_{44} - H_{521}$ evaluated at $(x_1^2,x_2^2,x_3^2)$.

\section{Poset of SOS Certifications}\label{sec:posets}

Theorem \ref{thm:counterexample} offers a counterexample in the pair of partitions (521, 44).
We provide an exact, rational certificate of nonnegativity by applying rational rounding to the numerical solution returned by the SDP solver, and then factoring the resulting matrix to obtain a provably-correct expression of $(H_{44} - H_{521})(x_1^2,x_2^2,x_3^2)$ as a sum of squares.  However, our search over other pairs of partitions returned many other counterexamples, in degrees 8, 9, and 10. We left these other counterexamples in floating point, though we found an exact, rational SOS certificate in the case of $H_{44} - H_{521}$.
Below we provide a partially ordered set of all differences of term-normalized homogeneous symmetric polynomials of degree $8, 9, 10$ in three variables that are SOS. That is, for each arrow going from $\lambda$ to $\mu$, $\big(H_\mu - H_\lambda\big)(x_1^2,x_2^2,x_3^2)$ is an SOS polynomial, as certified numerically.

\begin{figure}[H]
	\centering
		\includegraphics[width=0.7\textwidth]{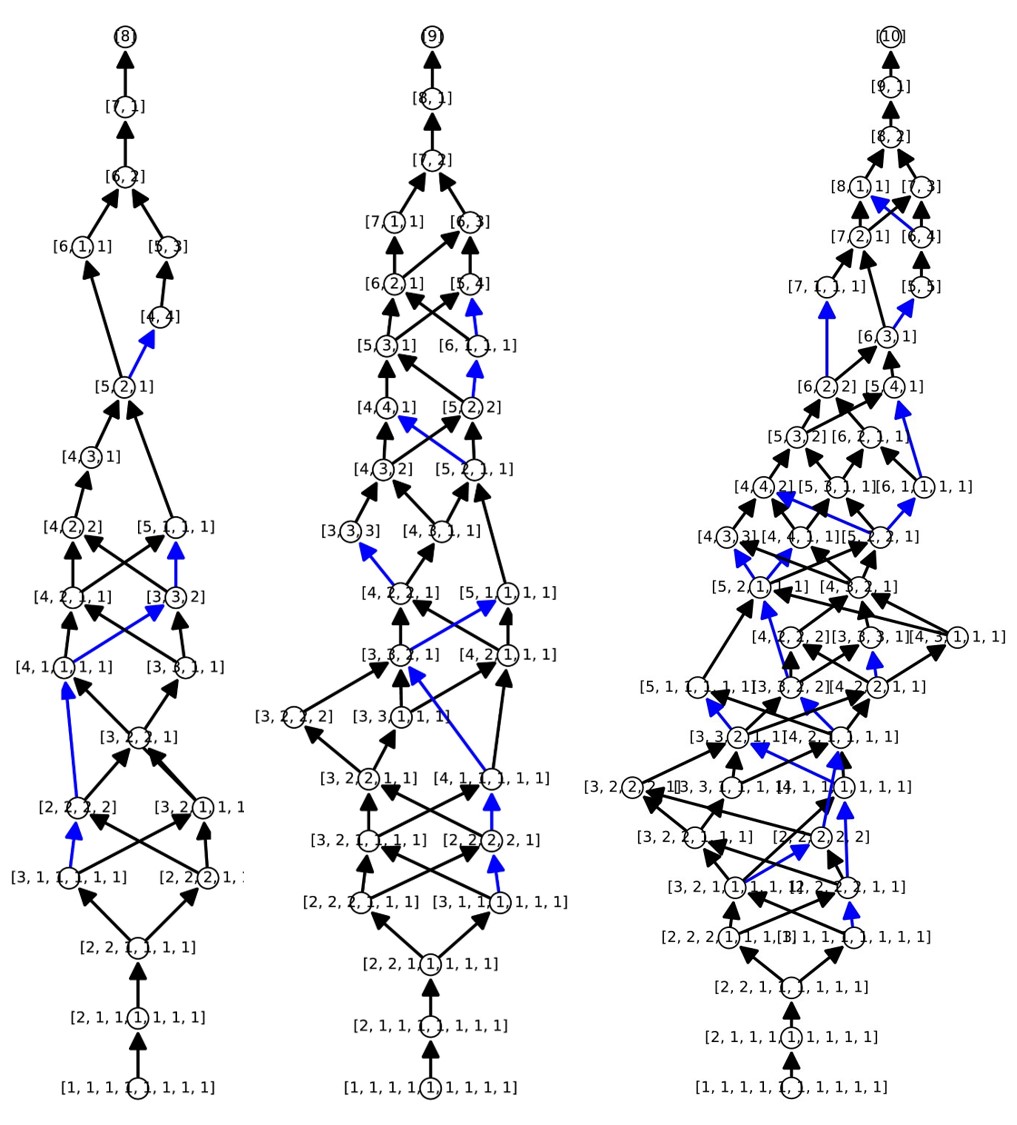}
	\label{fig:HSOS_Poset}
\end{figure}

The black arrows coincide with the dominance order, and therefore certify that these polynomials are not only nonnegative as stated in Theorem \ref{thm:inequalitiesCGS}, but in fact SOS.
Additionally, the blue arrows (numerically) certify SOS-ness for incomparable pairs of partitions, i.e. each blue arrow is a counterexample to the conjecture.

\begin{remark}
 A natural question arises: Is there another partial order on the set of partitions which matches the poset of nonnegativity relations amongst the $H_\lambda$ above?
 A first idea would be to modify the dominance (also called majorization) order slightly, to incorporate the correct relationships among the $H_\lambda$. Since dominance order is related to the cumulative sums produced by the vectors $$(1,0,0,0,\dots), \, (1,1,0,0,\dots), \, (1,1,1,0,\dots), \, \dots$$
 perhaps a modification of the components of these vectors might be a step in the correct direction. We thank the anonymous reviewer for suggesting this possibility. Such a modification might also be informed by the relationship of the homogeneous symmetric functions to the other usual bases. 
\end{remark}

\begin{remark}
If we fix the degree and let the number of variables $n$ go to infinity, the homogeneous symmetric functions behave like the elementary symmetric functions.  This is because the number of square free terms will dominate the number of monomials with squares for very large $n$.  Thus, as $n$ increases, we will likely see fewer counterexamples.  An interesting question is whether the conjecture is true asymptotically i.e. does $H_\lambda \leq H_\mu$ imply $\mu \succeq \lambda$ for large enough $n$?
\end{remark}

\section{Proof of Theorem \ref{thm:counterexample}} \label{sec:proof}

As outlined in Section \ref{sec:methods}, our proof relies on Proposition \ref{lem:ldl} and on the following well-known fact.

\begin{lemma} \label{lem:nonnegativeoctant}
Consider a polynomial $H(x_1,\ldots,x_n)$. Define another polynomial $$h(x_1,\ldots,x_n) = H(x_1^2,\ldots,x_n^2).$$
If $h$ can be written as a sum of squares, then $H$ is nonnegative on the nonnegative orthant.
\end{lemma}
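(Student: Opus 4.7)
The plan is essentially a one-line substitution argument. Suppose $h(x_1,\ldots,x_n) = \sum_{i=1}^k p_i(x_1,\ldots,x_n)^2$ for some polynomials $p_i \in \mathbb{R}[x_1,\ldots,x_n]$. Then $h(x) \geq 0$ for every $x \in \mathbb{R}^n$, since each summand is a real square. I want to leverage this unconstrained nonnegativity of $h$ to get the constrained nonnegativity of $H$ on $\mathbb{R}_{\geq 0}^n$.

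The bridge is the change of variables $x_i = \sqrt{y_i}$, which is available exactly when $y_i \geq 0$. Given any $y = (y_1,\ldots,y_n) \in \mathbb{R}_{\geq 0}^n$, pick $x_i \in \mathbb{R}$ with $x_i^2 = y_i$ (for instance $x_i = \sqrt{y_i}$). Then by the definition $h(x) = H(x_1^2,\ldots,x_n^2)$, I have
\[
H(y_1,\ldots,y_n) \;=\; H(x_1^2,\ldots,x_n^2) \;=\; h(x_1,\ldots,x_n) \;=\; \sum_{i=1}^k p_i(x_1,\ldots,x_n)^2 \;\geq\; 0.
\]
Since $y$ was arbitrary in the nonnegative orthant, this gives $H \geq 0$ on $\mathbb{R}_{\geq 0}^n$, which is what we want.

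There is no real obstacle here; the only thing worth noting is that the converse direction (nonnegativity on the orthant implying an SOS decomposition after the substitution $x_i \mapsto x_i^2$) is not claimed, and indeed would require more, since nonnegative polynomials need not be sums of squares in general (Hilbert, and the references to Scheiderer cited above). The lemma only asserts the easy direction, and its proof is simply the substitution displayed above.
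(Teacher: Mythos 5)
Your proof is correct and uses the same idea as the paper: substitute $x_i = \sqrt{y_i}$ for $y$ in the nonnegative orthant and invoke the nonnegativity of the sum of squares $h$. The paper phrases it as a proof by contradiction, but it is the same one-line substitution argument, and your direct version is if anything slightly cleaner.
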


\begin{proof}
Suppose 
$$ h = \sum d_i q_i^2 $$
for positive $d_i > 0$ and polynomials $q_i(x_1,\ldots,x_n)$.  Then $h$ is nonnegative on all of $\mathbb{R}^n$. By way of contradiction, assume there is some point $(a_1,\ldots,a_n) \in \mathbb{R}^n_{\geq 0}$ where $H(a_1,\ldots,a_n) < 0$. This implies that there exist real numbers $\sqrt{a_1},\ldots,\sqrt{a_n}$. But then $h(\sqrt{a_1},\ldots,\sqrt{a_n}) = H(a_1,\ldots,a_n) < 0$, contradicting the nonnegativity of $h$.
\end{proof}

We also use one more lemma to impose additional constrains and help further reduce the size of our SDP, thus ensuring we can apply rational rounding.

\begin{lemma} \label{lem:zeros}
If $x^*$ is a (nonzero) real root of the polynomial $h = m^T A m$, which we write as a sum of squares using the factorization of $A$, then the monomial vector $m$ evaluated at $x^*$ must be in the nullspace of $A$.
\end{lemma}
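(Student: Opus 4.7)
The plan is to exploit the positive semidefiniteness of $A$ directly. Since $A \in \mathcal{S}_+^N$, I can factor it as $A = B^T B$ for some real matrix $B$ (for example, via Cholesky, an $LDL^T$ decomposition with the nonnegative square roots of the diagonal absorbed into $L$, or the symmetric matrix square root). With this factorization,
\[
h(x) = m(x)^T A\, m(x) = m(x)^T B^T B\, m(x) = \| B\, m(x) \|^2,
\]
which is exactly the sum-of-squares expression referenced in the lemma statement: the squared entries of the vector $B\,m(x)$ are the individual squares.

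Next I would use the hypothesis $h(x^*) = 0$. Because $\|B\,m(x^*)\|^2 = 0$ and the Euclidean norm is definite, this forces $B\, m(x^*) = 0$ in $\mathbb{R}^N$. Multiplying on the left by $B^T$ gives $A\, m(x^*) = B^T B\, m(x^*) = 0$, which is precisely the claim that $m(x^*)$ lies in the nullspace of $A$.

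An equivalent route, which makes the geometric picture even clearer and might be preferable to the authors for matching their spectral/SDP framing, is to use the spectral decomposition $A = \sum_i d_i v_i v_i^T$ with positive eigenvalues $d_i > 0$ and orthonormal eigenvectors $v_i$ spanning the image of $A$. Then $h = \sum_i d_i (v_i^T m)^2$, and vanishing of this sum of nonnegative terms at $x^*$ forces $v_i^T m(x^*) = 0$ for every $i$. Thus $m(x^*)$ is orthogonal to $\mathrm{im}(A)$, and since $A$ is symmetric we have $\ker(A) = \mathrm{im}(A)^\perp$, again giving $m(x^*) \in \ker(A)$.

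There is essentially no obstacle here: the argument is a one-line consequence of the fact that a sum of nonnegative quantities vanishes only when each summand vanishes, combined with the standard symmetric-matrix identity $\ker(A) = \mathrm{im}(A)^\perp$. The only mild point of care is observing that the hypothesis that $x^*$ is \emph{nonzero} is not needed for the argument itself; it matters only in applications, where the trivial element $m(x^*) = 0$ would impose no useful linear constraint on the entries of $A$ when paring down the SDP.
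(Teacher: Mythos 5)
Your argument is correct and is exactly the paper's argument: the paper's proof simply notes that $0 = h(x^*) = m(x^*)^T A\, m(x^*)$ together with positive semidefiniteness of $A$ forces $A\,m(x^*) = 0$, and your factorization $A = B^T B$ (or the spectral decomposition) is just the standard way to spell out that implication. Your side remark that the nonzero hypothesis is only needed for the constraint to be useful in the SDP, not for the proof, is also accurate.
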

\begin{proof}
This follows from $0 = h(x^*) = m(x^*)^T A m(x^*)$ and the fact that $A$ is positive semidefinite.
\end{proof}

\begin{proof}[Proof of the main Theorem \ref{thm:counterexample}]
We ran a semidefinite program to find a symmetric positive semidefinite matrix whose factorization could produce a sums of squares representation for $H_{44} - H_{521}$ evaluated at $x_1^2,x_2^2,x_3^2$. By Lemma \ref{lem:nonnegativeoctant} this certifies the non-negativity of $H_{44} - H_{521}$ on the non-negative octant. By Proposition \ref{lem:ldl}, the existence of a matrix which \textit{exactly} reproduces our polynomial is equivalent to the existence of a sums of squares representation. Also by Proposition \ref{lem:ldl}, we impose linear constraints on the entries of our unknown matrix $A$ to require that $m^T A m$ exactly matches the coefficients of our desired polynomial. By Theorem \ref{thm:symmetry}, we impose linear constraints to force our unknown matrix $A$ to commute with the action of the symmetric group on the space of symmetric $45 \times 45$ matrices. Finally, by Lemma \ref{lem:zeros}, we incorporate 4 linearly independent vectors $m(x^*)$, coming from real zeros of our polynomial, to obtain more linear conditions on our SDP. The output is a numerical matrix sufficiently located in the PSD cone such that continued fractions rational approximation yields a matrix with exact, rational entries.  This matrix remains positive semidefinite and produces our desired polynomial via $m^T A m$. We pause to emphasize that without using symmetry, and without using real zeros, the numerical output of the SDP was not accurate enough to recover an exact solution. Only by using symmetry and real zeros were our methods successful. The entries of this $45 \times 45$ matrix include rational numbers with quite large denominators.  Therefore we do not print the matrix here. Rather, we refer the reader to the first author's website \cite{alex} for the explicit matrix.

To give the reader a feel for the matrix, the first row is:
\begin{multline*}
    \bigg( \frac{17}{9450},\,0,\,0,\,-\frac{4}{1433},\,0,\,-\frac{4}{1433},\,0,\,0,\,0,\,0,\,\frac{5}{6699},\,0,\,\frac{1}{484},\,0,\,\frac{5}{6699},\,0,\,0,\,0,\,0,\,0,\,0,\,\frac{1}{5516}, \\ \,0,\,\frac{4}{6655},\,0,\,\frac{4}{6655},\,0,\,\frac{1}{5516},\,0,\,0,\,0,\,0,\,0,\,0,\,0,\,0,\,\frac{157747519610069845105323375343}{7800425434777364748948750531770400}, \\ \,0,\,-\frac{1}{2243},\,0,\,-\frac{490859542561433043273727488474533399}{1004640661046807224753241364163337033424}, \,0, \,-\frac{1}{2243},\,0, \\ \,\frac{157747519610069845105323375343}{7800425434777364748948750531770400} \bigg)
\end{multline*}

 In order to produce an explicit sum of squares representation for $(H_{44} - H_{521})(x_1^2,x_2^2,x_3^2)$ it remains to factor this matrix. We factored the matrix using exact, rational arithmetic, obtaining from this factorization an explicit sum of squares representation. The first polynomial which is to be squared and linearly combined with other squares is displayed below. The others can be found in a \texttt{.txt} file at \cite{alex}, along with the required (positive) coefficients. For those averse to squaring and summing by hand, we also provide open-source computer code that squares and sums them, verifying Theorem \ref{thm:counterexample}. We have also made this code available at the ``mathrepo'' hosted by Max Planck Institute for Mathematics in the Sciences at \href{https://mathrepo.mis.mpg.de/soscounterexample/index.html}{https://mathrepo.mis.mpg.de/soscounterexample/index.html}.


\begin{multline*}
    \frac{4605419240763602856075916837234045570536179818486337018319281394377295}{47993109370744358093263776366419533489066684222682143432782500766944216} x_1^7 x_2\\ - \frac{1302314037803046313255311795382548501141402236228879297620871691257074336598417}{7369150355385188173743144552068464620468501008189426176987169325021222725489728} x_1^5 x_2^3\\ - \frac{1302314037803046313255311795382548501141402236228879297620871691257074336598417}{7369150355385188173743144552068464620468501008189426176987169325021222725489728} x_1^3 x_2^5\\ + \frac{4605419240763602856075916837234045570536179818486337018319281394377295}{47993109370744358093263776366419533489066684222682143432782500766944216} x_1 x_2^7\\ - \frac{50984253124688929255958546913886462493468582355009036545135651731904625}{191972437482977432373055105465678133956266736890728573731130003067776864} x_1^5 x_2 x_3^2\\ + x_1^3 x_2^3 x_3^2\\ - \frac{50984253124688929255958546913886462493468582355009036545135651731904625}{191972437482977432373055105465678133956266736890728573731130003067776864} x_1 x_2^5 x_3^2\\ - \frac{14501649058686817280526502723849668219220268526201662418929955838792199757943}{74143210859800577976256596722743277540321426588463408289289567524830644860352} x_1^3 x_2 x_3^4\\ - \frac{14501649058686817280526502723849668219220268526201662418929955838792199757943}{74143210859800577976256596722743277540321426588463408289289567524830644860352} x_1 x_2^3 x_3^4\\ + \frac{5437035811814876899195863129168499484298662586364259319969151352416367123384855}{64825494532529077215896724731477274708183844806416358400684005156046068663292451} x_1 x_2 x_3^6
\end{multline*}

\end{proof}

\section*{Acknowledgments}

The authors would like to thank Serkan Hosten, Bernd Sturmfels, Mohab Safey El Din, and Greg Blekherman for many helpful discussions. We also wish to thank the Max Planck Institute (MPI) for Mathematics in the Sciences.  Much of the work was done when Isabelle Shankar visited MPI in Leipzig, Germany.  Finally, we thank the anonymous reviewer for numerous helpful comments that improved the exposition.

\bibliographystyle{plain}
\bibliography{references}

\end{document}